\title{Noether's theorem for fractional variational problems of variable order}
\author{Tatiana~Odzijewicz\inst{1}$^,$\email{tatianao@ua.pt},
        Agnieszka~B.~Malinowska\inst{2}$^,$\email{a.malinowska@pb.edu.pl},
        Delfim~F.~M.~Torres\inst{1}$^,$\email{delfim@ua.pt}}
\institute{
     \inst{1} CIDMA -- Center for Research and Development in Mathematics and Applications,\\
     Department of Mathematics, University of Aveiro, 3810-193 Aveiro, Portugal
     \inst{2} Faculty of Computer Science, Bia{\l}ystok University of Technology, 15-351 Bia\l ystok, Poland
}
\abstract{We prove a necessary optimality condition of Euler--Lagrange type for fractional variational
problems with derivatives of incommensurate variable order. This allows us to state a version
of Noether's theorem without transformation of the independent (time) variable.
Considered derivatives of variable order are defined in the sense of Caputo.}
\keywords{variable order fractional integrals \*\
variable order fractional derivatives \*\ fractional variational analysis \*\
Euler--Lagrange equations \*\ Noether's theorem}
\pacs{02.30.Xx, 45.10.Db, 45.10.Hj}
\begin{document}

\maketitle


\section{Introduction}

Fractional calculus is the scientific discipline that deals
with integrals and derivatives of arbitrary real (or complex) order.
Since the XVIIth century, when fractional integration and differentiation
was brought up for the first time, many well-known mathematicians contributed
to the theory, among them Euler, Laplace, Fourier, Abel, Liouville and Riemann.
For a comprehensive knowledge of fractional calculus
we refer the reader to the books \cite{book:Kilbas,book:Klimek,book:Podlubny,Samko:book}.

The XXth century has shown that fractional order calculus is more adequate
to describe real world problems than the integer/standard
calculus \cite{Caponetto,MR2969872}. Therefore, not only mathematicians
have currently a strong interest in the fractional calculus
but also researchers in applied fields such as mechanics, physics, chemistry,
biology, economics, control theory and signal processing \cite{b:Baleanu,book:Ortigueira,TM:rec}.

A generalization of the fractional calculus was proposed in 1993
by Samko and Ross \cite{Samko:Ross2}. They considered integrals and derivatives of order $\alpha$,
where $\alpha$ is not a constant but a function. Afterwards, several works were dedicated
to these operators \cite{Samko:Ross1,Samko}. Interesting applications of the proposed calculus are found
in the theory of viscous flows and mechanics \cite{Coimbra,Diaz:Coimbra,Lorentzo,Ramirez:Coimbra1,Ramirez:Coimbra2}.

In 1996, Riewe initiated the theory of the fractional calculus of variations by considering
problems of mechanics with fractional order derivatives \cite{CD:Riewe:1996,CD:Riewe:1997}.
Nowadays, the fractional calculus of variations is under current strong research (see, e.g.,
\cite{Ref:2,DerInt,Ref:1,Ref:4,MyID:152,Cresson,gastao4,Ref:3,comDorota,MyID:181}
and references therein). For the state of the art on the subject we refer to the recent
book \cite{TheBook}. Here we remark that results for problems of the calculus of variations
with variable order fractional operators are scarce, reducing to those found
in \cite{Atanackovic1,MyID:241,Tatiana:IDOTA2011}. In particular,
no Noether's symmetry theorem of variable order has been proved before.

The main aim of the current work is to generalize the Noether theorem,
originally proved by the German mathematician Emmy Noether in 1918,
asserting that invariance properties of integral functionals lead to
corresponding conservation laws \cite{jurgen,book:Kossman,book:vanBrunt}.
In contrast with previous works \cite{MR2524365,gastao1,gastao3,gastao4},
where fractional versions of the Noether theorem for a constant non-integer order $\alpha$
are obtained, here we consider more general fractional variational problems
with variable order derivatives.

The paper is organized as follows. In Section~\ref{sec:prelim}, basic definitions
and properties of variable order fractional operators are given. Sections~\ref{sec:ibp},
\ref{sec:MR} and \ref{subsec:noether} contain our main results:
we prove integration by parts formulas for variable order fractional integrals
(Theorem~\ref{thm:IntByParts1}) and derivatives (Theorem~\ref{thm:IntByParts2}),
a necessary optimality condition of Euler--Lagrange type
for fractional variational problems with derivatives of incommensurate variable order
(Theorem~\ref{theorem:EL}), and a version of Noether's theorem for such fractional variable order problems
(Theorem~\ref{theorem:Noether}). Then, in Section~\ref{sec:ex}, we illustrate our results through an example.
We finish with Section~\ref{sec:conc} of conclusion.


\section{Variable order fractional operators}
\label{sec:prelim}

In this section we recall the basic definitions necessary in the sequel,
and derive a new interesting mapping property for the fractional integral operators
of variable order (Theorem~\ref{thm:fpr}).
We set $\Delta := \{(t,\tau)\in \mathbb{R}^2 : a \le \tau < t \le b\}$.

\begin{definition}[Left and right Riemann--Liouville integrals of variable order]
Let $\alpha : \Delta \rightarrow (0,1)$ and $f\in L_1 [a,b]$. Then,
\begin{equation*}
{_{a}}\textsl{I}^{\alpha(\cdot,\cdot)}_{t}f(t)
= \int\limits_a^t\frac{1}{\Gamma(\alpha(t,\tau))}
(t-\tau)^{\alpha(t,\tau)-1}f(\tau)d\tau, \quad t>a,
\end{equation*}
is called the left Riemann--Liouville integral
of variable fractional order $\alpha(\cdot,\cdot)$, while
\begin{equation*}
{_{t}}\textsl{I}^{\alpha(\cdot,\cdot)}_{b}f(t)
=\int\limits_t^b \frac{1}{\Gamma(\alpha(\tau,t))}
(\tau-t)^{\alpha(\tau,t)-1}f(\tau)d\tau, \quad t<b,
\end{equation*}
denotes the right Riemann--Liouville integral
of variable fractional order $\alpha(\cdot,\cdot)$.
\end{definition}

\begin{theorem}
\label{thm:fpr}
Let $\alpha(t,\tau)=\alpha(t-\tau)$ with
$0<\alpha(t-\tau)<1-\frac{1}{n}$ for all $0 < t - \tau \le b$
and a certain $n\in\mathbb{N}$ greater or equal than two.
If $f\in AC[0,b]$, then
\begin{equation*}
{_{0}}\textsl{I}^{1-\alpha(\cdot,\cdot)}_{t}f(t)
= \int\limits_0^t\frac{1}{\Gamma(1-\alpha(t-\tau))}
(t-\tau)^{-\alpha(t-\tau)}f(\tau)d\tau \in AC[0,b].
\end{equation*}
If $f\in AC[-b,0]$, then
\begin{equation*}
{_{t}}\textsl{I}^{1-\alpha(\cdot,\cdot)}_{0}f(t)
= \int\limits_t^0\frac{1}{\Gamma(1-\alpha(\tau-t))}
(\tau-t)^{-\alpha(\tau-t)}f(\tau)d\tau \in AC[-b,0].
\end{equation*}
\end{theorem}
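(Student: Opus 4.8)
The plan is to recognize the operator as a convolution and then exploit the standard characterization that $g\in AC[0,b]$ if and only if $g(t)=g(0)+\int_0^t\varphi(s)\,ds$ for some $\varphi\in L_1[0,b]$. First I would substitute $u=t-\tau$ in the defining integral. Since $\alpha$ depends only on the difference $t-\tau$, this turns the left operator into the genuine convolution
\begin{equation*}
g(t):={_{0}}\textsl{I}^{1-\alpha(\cdot,\cdot)}_{t}f(t)=\int_0^t K(u)f(t-u)\,du,\qquad K(u):=\frac{u^{-\alpha(u)}}{\Gamma(1-\alpha(u))}.
\end{equation*}
The whole difficulty is thereby transferred to understanding the single-variable kernel $K$.

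The main technical step, and the place where the hypothesis $0<\alpha<1-\frac1n$ is used, is to show that $K\in L_1[0,b]$. On the range of $1-\alpha(u)$, namely $(\tfrac1n,1)\subset(0,1]$, the Gamma function is bounded below by $\Gamma(1)=1$, so $|K(u)|\le u^{-\alpha(u)}$. For $0<u\le 1$ the exponent satisfies $-\alpha(u)>-(1-\frac1n)$, whence $u^{-\alpha(u)}\le u^{-1+1/n}$, and $\int_0^1 u^{-1+1/n}\,du=n<\infty$ precisely because $n\ge 2$ keeps the exponent strictly above $-1$; for $1<u\le b$ one has $u^{-\alpha(u)}\le 1$. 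Summing the two contributions gives $K\in L_1[0,b]$, and I would then set $\mathcal K(t):=\int_0^t K(u)\,du$, which is absolutely continuous with $\mathcal K(0)=0$ and $\mathcal K'=K$ almost everywhere.

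Finally I would insert the representation $f(\tau)=f(0)+\int_0^\tau f'(s)\,ds$ (valid since $f\in AC[0,b]$, so $f'\in L_1[0,b]$) into the form $g(t)=\int_0^t K(t-\sigma)f(\sigma)\,d\sigma$ and apply Fubini's theorem, legitimate because $K\ge 0$ and $f'\in L_1$. This yields
\begin{equation*}
g(t)=f(0)\,\mathcal K(t)+\int_0^t \mathcal K(t-s)f'(s)\,ds.
\end{equation*}
The first summand is absolutely continuous as a constant multiple of $\mathcal K$. For the second, a further application of Fubini shows it equals $\int_0^t (K*f')(r)\,dr$, where $(K*f')(r)=\int_0^r K(r-s)f'(s)\,ds$ belongs to $L_1[0,b]$ by Young's inequality, $\|K*f'\|_1\le\|K\|_1\|f'\|_1$; being the primitive of an $L_1$ function, this term is absolutely continuous, and hence so is $g$. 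The claim for the right integral on $[-b,0]$ follows by the analogous substitution $u=\tau-t$ and the same argument. I expect the integrability of $K$ near $u=0$ to be the crux: without the uniform separation of $\alpha$ from $1$ provided by $1-\frac1n$, the singularity $u^{-\alpha(u)}$ could fail to be integrable, so this estimate is exactly what the hypothesis is designed to guarantee.
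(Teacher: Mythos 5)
Your proof is correct and follows essentially the same route as the paper: rewrite the operator as a convolution with the one-variable kernel $K(u)=u^{-\alpha(u)}/\Gamma(1-\alpha(u))$, prove $K\in L_1[0,b]$ by splitting at $u=1$ and using $\alpha<1-\frac{1}{n}$ to control the singularity, then insert $f(\tau)=f(0)+\int_0^\tau f'(s)\,ds$ and apply Fubini to express the result as $f(0)\int_0^t K(u)\,du+\int_0^t (K*f')(r)\,dr$, a sum of primitives of $L_1$ functions, hence absolutely continuous. The only cosmetic difference is that you bound $1/\Gamma(1-\alpha(u))\le 1$ via the monotonicity (or convexity) of $\Gamma$ on $(0,1]$, whereas the paper invokes Iv\'ady's inequality $\Gamma(x+1)\ge (x^2+1)/(x+1)$ to the same effect.
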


\begin{proof}
We give the proof for the left integral; the other case being proved similarly.
Let $k(s):=\frac{1}{\Gamma(1-\alpha(s))}s^{-\alpha(s)}$. Since $0<\alpha(s)<1-\frac{1}{n}$,
\begin{enumerate}
\item for $s\leq 1$ we have $\ln s\geq 0$ and $s^{-\alpha(s)}< 1$,
\item for $s< 1$ we have $\ln s< 0$ and $s^{-\alpha(s)}< s^{\frac{1}{n}-1}$.
\end{enumerate}
Therefore,
\begin{equation*}
\int_0^b\left| k(s) \right|d\tau=\int_0^b\left| \frac{1}{\Gamma(1-\alpha(s))}s^{-\alpha(s)} \right|d\tau\\
<\int_0^1\frac{1}{\Gamma(1-\alpha(s))}ds+\int_1^b\frac{1}{\Gamma(1-\alpha(s))}s^{\frac{1}{n}-1}ds.
\end{equation*}
Using the inequality
\begin{equation}
\label{eq:ineq:gam}
\Gamma(x+1) \geq \frac{x^2+1}{x+1},
\end{equation}
valid for all $x \in [0,1]$ (see \cite{Ivady}), we obtain that
\begin{equation*}
\int_0^1\frac{1}{\Gamma(1-\alpha(s))}ds+\int_1^b\frac{1}{\Gamma(1-\alpha(s))}s^{\frac{1}{n}-1}ds
<\int_0^1 ds+\int_1^b s^{\frac{1}{n}-1}ds=1+nb^{\frac{1}{n}}-n<\infty.
\end{equation*}
It means that $k(s) \in L_1[0,b]$.
Moreover, the condition $f\in AC\left([0,b]\right)$ implies that
\begin{equation*}
f(t)=\int_0^t g(\tau)d\tau+f(0),~ \textnormal{where}~g\in L_1\left([0,b]\right).
\end{equation*}
Define
$$
h(t):=\int_0^t \frac{1}{\Gamma(1-\alpha(s))}s^{-\alpha(s)}g(t-s)ds
$$
and integrate:
\begin{equation}
\label{eq:order}
\int_0^t h(\theta)d\theta =\int_0^t d\theta \int_0^{\theta} \frac{1}{\Gamma(1-\alpha(s))}s^{-\alpha(s)}g(\theta-s)ds.
\end{equation}
Observe that
\begin{equation*}
\begin{split}
\int_0^t\left(\int_s^t \left|\frac{1}{\Gamma(1-\alpha(s))}s^{-\alpha(s)}g(\theta-s)\right|d\theta\right)ds
&=\int_0^t\left(\left|\frac{1}{\Gamma(1-\alpha(s))}s^{-\alpha(s)}\right|\left(
\int_s^t\left|g(\theta-s)\right|d\theta\right)\right)ds\\
&<\left\|g\right\|_1\int_0^b\left|\frac{1}{\Gamma(1-\alpha(s))}s^{-\alpha(s)}\right|ds\\
&<\infty.
\end{split}
\end{equation*}
It means that assumptions of Fubini's theorem are satisfied,
so we can change an order of integration in \eqref{eq:order}. Hence,
$$
\int_0^t h(\theta)d\theta =\int_0^t ds \int_s^t \frac{1}{\Gamma(1-\alpha(s))}s^{-\alpha(s)}g(\theta-s)d\theta
=\int_0^t \frac{1}{\Gamma(1-\alpha(s))}s^{-\alpha(s)}ds \int_s^t g(\theta-s)d\theta.
$$
Putting $\xi=\theta-s$, one has $d\xi=d\theta$ and we get
$$
\int_0^t h(\theta)d\theta=\int_0^t \frac{1}{\Gamma(1-\alpha(s))}s^{-\alpha(s)}ds \int_0^{t-s} g(\xi)d\xi.
$$
But $\int_0^{t-s} g(\xi)d\xi=f(t-s)-f(0)$ and, therefore, the following equality holds:
$$
\int_0^t h(\theta)d\theta=\int_0^t \frac{1}{\Gamma(1-\alpha(s))}s^{-\alpha(s)}f(t-s)ds
-f(0)\int_0^t \frac{1}{\Gamma(1-\alpha(s))}s^{-\alpha(s)} ds.
$$
Putting $\tau=t-s$, $d\tau=-ds$ and
$$
\int_0^t h(\theta)d\theta=\int_0^t \frac{1}{\Gamma(1-\alpha(t-\tau))}(t-\tau)^{-\alpha(t-\tau)}f(\tau)d\tau
-f(0)\int_0^t \frac{1}{\Gamma(1-\alpha(t-\tau))}(t-\tau)^{-\alpha(t-\tau)} d\tau.
$$
Both functions on the right-hand side of the equality
$$
\int_0^t \frac{1}{\Gamma(1-\alpha(t-\tau))}(t-\tau)^{-\alpha(t-\tau)}f(\tau)d\tau
=\int_0^t h(\theta)d\theta+f(0)\int_0^t \frac{1}{\Gamma(1-\alpha(t-\tau))}(t-\tau)^{-\alpha(t-\tau)} d\tau
$$
belong to $AC\left([0,b]\right)$, hence ${_{0}}\textsl{I}^{\alpha(\cdot-\cdot)}_{t}f\in AC[0,b]$.
\end{proof}

\begin{definition}[Left and right Riemann--Liouville derivatives of variable order]
Let $\alpha : \Delta \rightarrow (0,1)$.
If ${_{a}}\textsl{I}^{1-\alpha(\cdot,\cdot)}_{t}f \in AC[a,b]$,
then the left Riemann--Liouville derivative of variable fractional
order $\alpha(\cdot,\cdot)$ is defined by
\begin{equation*}
{_{a}}\textsl{D}^{\alpha(\cdot,\cdot)}_{t} f(t)
= \frac{d}{dt} {_{a}}\textsl{I}^{1-\alpha(\cdot,\cdot)}_{t} f(t)
=\frac{d}{dt}\int\limits_a^t
\frac{1}{\Gamma(1-\alpha(t,\tau))}(t-\tau)^{-\alpha(t,\tau)} f(\tau)d\tau, \quad t>a,
\end{equation*}
while the right Riemann--Liouville derivative of variable order
$\alpha(\cdot,\cdot)$ is defined for functions $f$ such that
${_{t}}\textsl{I}^{1-\alpha(\cdot,\cdot)}_{b}f\in AC[a,b]$ by
\begin{equation*}
{_{t}}\textsl{D}^{\alpha(\cdot,\cdot)}_{b}f(t)
= -\frac{d}{dt} {_{t}}\textsl{I}^{1-\alpha(\cdot,\cdot)}_{b}f(t)
=\frac{d}{dt}\int\limits_t^b
\frac{-1}{\Gamma(1-\alpha(\tau,t))}(\tau-t)^{-\alpha(\tau,t)}f(\tau)d\tau, \quad t<b.
\end{equation*}
\end{definition}

\begin{definition}[Left and right Caputo derivatives of variable fractional order]
\label{definition:Caputo}
Let $0<\alpha(t,\tau)<1$ for all $t, \tau \in [a,b]$.
If $f\in AC[a,b]$, then the left Caputo derivative
of variable fractional order $\alpha(\cdot,\cdot)$ is defined by
\begin{equation*}
{^{C}_{a}}\textsl{D}^{\alpha(\cdot,\cdot)}_{t}f(t)
=\int\limits_a^t
\frac{1}{\Gamma(1-\alpha(t,\tau))}(t-\tau)^{-\alpha(t,\tau)}\frac{d}{d\tau}f(\tau)d\tau, \quad t>a,
\end{equation*}
while the right Caputo derivative of variable fractional order $\alpha(\cdot,\cdot)$ is given by
\begin{equation*}
{^{C}_{t}}\textsl{D}^{\alpha(\cdot,\cdot)}_{b}f(t)
=\int\limits_t^b\frac{-1}{\Gamma(1-\alpha(\tau,t))}
(\tau-t)^{-\alpha(\tau,t)}\frac{d}{d\tau}f(\tau)d\tau, \quad t<b.
\end{equation*}
\end{definition}


\section{Variable order fractional integration by parts}
\label{sec:ibp}

In this section we derive integration by parts formulas,
which are essential for proving Euler--Lagrange equations.

\begin{theorem}[Integration by parts for variable order fractional integrals]
\label{thm:IntByParts1}
If $\frac{1}{n}<\alpha(t,\tau)<1$ for all $(t,\tau) \in \Delta$
and a certain $n\in\mathbb{N}$ greater or equal than two,
and $f,g\in C\left([a,b];\mathbb{R}\right)$, then
\begin{equation*}
\int_a^b g(t){_{a}}\textsl{I}^{\alpha(\cdot,\cdot)}_{t} f(t)dt
= \int_a^b f(t){_{t}}\textsl{I}^{\alpha(\cdot,\cdot)}_{b} g(t)dt.
\end{equation*}
\end{theorem}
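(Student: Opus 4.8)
The plan is to reduce the claimed identity to a single application of Fubini's theorem on the triangle $\Delta$. First I would insert the definition of the left Riemann--Liouville integral into the left-hand side, obtaining the iterated integral
\begin{equation*}
\int_a^b g(t)\,{_{a}}\textsl{I}^{\alpha(\cdot,\cdot)}_{t} f(t)\,dt
= \int_a^b \int_a^t \frac{g(t)\,f(\tau)}{\Gamma(\alpha(t,\tau))}\,(t-\tau)^{\alpha(t,\tau)-1}\,d\tau\,dt,
\end{equation*}
in which the region of integration is $\{a\le\tau<t\le b\}=\Delta$. Writing $F(t,\tau):=\frac{g(t)f(\tau)}{\Gamma(\alpha(t,\tau))}(t-\tau)^{\alpha(t,\tau)-1}$, this is simply the integral of $F$ over $\Delta$ with $t$ on the outside. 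Proceeding identically with the right-hand side, inserting the definition of the right Riemann--Liouville integral and relabelling the two dummy variables (the outer one plays the role of $\tau$, the inner one the role of $t$), I would recognise the right-hand side as the integral of the very same $F$ over $\Delta$, but with the order of integration reversed. The two sides therefore agree as soon as the order of integration may be exchanged.

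The decisive step is thus to verify the hypotheses of Fubini's theorem, that is, that $F$ is absolutely integrable on $\Delta$. Because $f$ and $g$ are continuous on the compact interval $[a,b]$ they are bounded, say $|f|,|g|\le M$, and since $\Gamma(\alpha)\ge 1$ for $\alpha\in(0,1)$ one has $\frac{1}{\Gamma(\alpha(t,\tau))}\le 1$; hence
\begin{equation*}
|F(t,\tau)| \le M^2\,(t-\tau)^{\alpha(t,\tau)-1}.
\end{equation*}
The only difficulty is the singularity along the diagonal $t=\tau$, and this is exactly where I expect the main obstacle to lie and where the hypothesis $\alpha(t,\tau)>\frac{1}{n}$ enters. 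Splitting $\Delta$ according to whether $t-\tau\le 1$ or $t-\tau>1$, as in the proof of Theorem~\ref{thm:fpr}, I would use $\alpha(t,\tau)-1>\frac{1}{n}-1$ to dominate $(t-\tau)^{\alpha(t,\tau)-1}$ by $(t-\tau)^{\frac{1}{n}-1}$ on the first piece and by the constant $1$ on the second. Since $\frac{1}{n}-1>-1$, the resulting majorant is integrable, as $\int_0^{b-a}s^{\frac{1}{n}-1}\,ds<\infty$, which furnishes the required absolute integrability.

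Once Fubini's theorem applies, interchanging the order of integration in the iterated integral of $F$ carries the left-hand side into the right-hand side, completing the argument. I would close by noting that continuity of $f$ and $g$ together with the uniform lower bound $\alpha>\frac{1}{n}$ are used solely to guarantee this integrability, so that no regularity beyond continuity is needed.
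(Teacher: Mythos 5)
Your proposal is correct and follows essentially the same route as the paper's proof: rewrite both sides as double integrals over the triangle $\Delta$, establish absolute integrability by splitting at $t-\tau=1$ and dominating $(t-\tau)^{\alpha(t,\tau)-1}$ by $(t-\tau)^{\frac{1}{n}-1}$ near the diagonal, then apply Fubini's theorem to exchange the order of integration. The only cosmetic difference is that you bound $\frac{1}{\Gamma(\alpha(t,\tau))}\le 1$ via the (true) monotonicity fact $\Gamma\ge 1$ on $(0,1)$, whereas the paper invokes the inequality $\Gamma(x+1)\ge\frac{x^2+1}{x+1}$ of \cite{Ivady} to the same effect.
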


\begin{proof}
Define
\[
F(\tau,t):= \left\{
\begin{array}{ll}
\left|\frac{1}{\Gamma(\alpha(t,\tau))}(t-\tau)^{\alpha(t,\tau)-1}
g(t)f(\tau)\right| & \mbox{if $\tau < t$},\\
0 & \mbox{if $\tau \geq t$,}
\end{array} \right.
\]
for all $(\tau,t)\in [a,b]\times [a,b]$.
Since $f$ and $g$ are continuous functions on $[a,b]$,
they are bounded on $[a,b]$, \textrm{i.e.},
there exist $C_1,C_2>0$ such that $\left|g(t)\right|\leq C_1$
and $\left|f(t)\right|\leq C_2$, $t\in[a,b]$. Therefore,
\begin{equation*}
\begin{split}
\int_a^b\left(\int_a^b F(\tau,t)d\tau\right)dt
&=\int_a^b \left(\int_a^t\left|\frac{1}{\Gamma(\alpha(t,\tau))}(t-\tau)^{\alpha(t,\tau)-1}
g(t)f(\tau)\right|d\tau\right)dt\\
&\leq C_1 C_2\int_a^b \left(\int_a^t\left|\frac{1}{\Gamma
\left(\alpha(t,\tau)\right)}(t-\tau)^{\alpha(t,\tau)-1}\right|d\tau\right)dt\\
&=C_1 C_2 \int_a^b \left(\int_a^t \frac{1}{\Gamma
(\alpha(t,\tau))} (t-\tau)^{\alpha(t,\tau)-1}d\tau\right)dt .
\end{split}
\end{equation*}
Because $\frac{1}{n}<\alpha(t,\tau)<1$,
\begin{enumerate}

\item for $1\leq t-\tau $ we have $\ln(t-\tau)\geq 0$ and
$(t-\tau)^{\alpha(t,\tau)-1}<1$;

\item for $1>t-\tau$ we have $\ln(t-\tau)<0$ and
$(t-\tau)^{\alpha(t,\tau)-1}<(t-\tau)^{\frac{1}{n}-1}$.

\end{enumerate}
Therefore,
\begin{multline*}
C_1 C_2\int_a^b  \left(\int_a^t \frac{1}{\Gamma
\left(\alpha(t,\tau)\right)} (t-\tau)^{\alpha(t,\tau)-1}d\tau\right) dt\\
<C_1 C_2\int_a^b\left(\int_a^{t-1}\frac{1}{\Gamma
\left(\alpha(t,\tau)\right)} d\tau
+\int_{t-1}^t \frac{1}{\Gamma\left(\alpha(t,\tau)\right)}
(t-\tau)^{\frac{1}{n}-1} d\tau\right)dt.
\end{multline*}
Moreover, by inequality \eqref{eq:ineq:gam},
valid for $x \in [0,1]$, one has
\begin{equation*}
\begin{split}
C_1 C_2 \int_a^b & \left(\int_a^{t-1}\frac{1}{\Gamma
\left(\alpha(t,\tau)\right)} d\tau
+\int_{t-1}^t \frac{1}{\Gamma
\left(\alpha(t,\tau)\right)}(t-\tau)^{\frac{1}{n}-1} d\tau\right)dt\\
&\leq C_1 C_2\int_a^b\left(\int_a^{t-1}\frac{\alpha^2(t,\tau)
+\alpha(t,\tau)}{\alpha^2(t,\tau)+1} d\tau
+\int_{t-1}^t \frac{\alpha^2(t,\tau)+\alpha(t,\tau)}{\alpha^2(t,\tau)
+1}(t-\tau)^{\frac{1}{n}-1} d\tau\right)dt\\
&< C_1 C_2\int_a^b\left(\int_a^{t-1}  d\tau
+\int_{t-1}^t (t-\tau)^{\frac{1}{n}-1} d\tau\right)dt\\
&=C_1 C_2(b-a)\left(\frac{b+a}{2}-1+n-a\right)\\
&<\infty.
\end{split}
\end{equation*}
Hence, one can use the Fubini theorem to change the order
of integration:
\begin{equation*}
\begin{split}
\int_a^b g(t){_{a}}\textsl{I}^{\alpha(\cdot,\cdot)}_{t} f(t)dt
&=\int_a^b \left(\int_a^t g(t)f(\tau)\frac{1}{\Gamma
\left(\alpha(t,\tau)\right)}(t-\tau)^{\alpha(t,\tau)-1}d\tau\right)dt\\
&=\int_a^b \left(\int_\tau^b g(t)f(\tau)\frac{1}{\Gamma
\left(\alpha(t,\tau)\right)}(t-\tau)^{\alpha(t,\tau)-1}dt\right)d\tau\\
&=\int_a^b f(\tau){_{\tau}}\textsl{I}^{\alpha(\cdot,\cdot)}_{b} g(\tau)d\tau.
\end{split}
\end{equation*}
\end{proof}

\begin{theorem}[Integration by parts for variable order fractional derivatives]
\label{thm:IntByParts2}
Let $0<\alpha(t,\tau)<1-\frac{1}{n}$ for all $(t,\tau)\in \Delta$
and a certain $n\in\mathbb{N}$ greater or equal than two.
If $f\in C^1\left([a,b];\mathbb{R}\right)$,
$g\in C\left([a,b];\mathbb{R}\right)$,
and ${_{t}}\textsl{I}^{1-\alpha(\cdot,\cdot)}_{b} g$, then
\begin{equation*}
\int_a^b g(t){^{C}_{a}}\textsl{D}^{\alpha(\cdot,\cdot)}_{t}f(t)dt
=\left.f(t){_{t}}\textsl{I}^{1-\alpha(\cdot,\cdot)}_{b} g(t)\right|_a^b
+\int_a^b f(t){_{t}}\textsl{D}^{\alpha(\cdot,\cdot)}_{b}g(t)dt.
\end{equation*}
If $f\in C^1\left([a,b];\mathbb{R}\right)$,
$g\in C\left([a,b];\mathbb{R}\right)$,
and ${_{a}}\textsl{I}^{1-\alpha(\cdot,\cdot)}_{t} g \in AC[a,b]$, then
\begin{equation*}
\int_a^b g(t){^{C}_{t}}\textsl{D}^{\alpha(\cdot,\cdot)}_{b}f(t)dt
=-\left.f(t){_{a}}\textsl{I}^{1-\alpha(\cdot,\cdot)}_{t} g(t)\right|_a^b
+\int_a^b f(t){_{a}}\textsl{D}^{\alpha(\cdot,\cdot)}_{t}g(t)dt.
\end{equation*}
\end{theorem}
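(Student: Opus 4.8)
The plan is to reduce the statement to the integration by parts formula for fractional integrals, Theorem~\ref{thm:IntByParts1}, followed by an ordinary integration by parts. The starting observation is that, directly from Definition~\ref{definition:Caputo} and the definition of the Riemann--Liouville integral, the Caputo derivative is nothing but the fractional integral of order $1-\alpha(\cdot,\cdot)$ applied to the classical derivative of $f$; that is,
\begin{equation*}
{^{C}_{a}}\textsl{D}^{\alpha(\cdot,\cdot)}_{t}f(t) = {_{a}}\textsl{I}^{1-\alpha(\cdot,\cdot)}_{t} f'(t),
\qquad
{^{C}_{t}}\textsl{D}^{\alpha(\cdot,\cdot)}_{b}f(t) = -\,{_{t}}\textsl{I}^{1-\alpha(\cdot,\cdot)}_{b} f'(t).
\end{equation*}
Since $f\in C^1$, the function $f'$ is continuous, and the hypothesis $0<\alpha(t,\tau)<1-\frac{1}{n}$ is equivalent to $\frac{1}{n}<1-\alpha(t,\tau)<1$, which is precisely the range required to invoke Theorem~\ref{thm:IntByParts1} with fractional order $1-\alpha(\cdot,\cdot)$ in place of $\alpha(\cdot,\cdot)$.

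For the first formula, I would substitute the above identity into the left-hand side and apply Theorem~\ref{thm:IntByParts1}, with $f'$ playing the role of the integrand and $g$ unchanged, to obtain
\begin{equation*}
\int_a^b g(t)\,{^{C}_{a}}\textsl{D}^{\alpha(\cdot,\cdot)}_{t}f(t)\,dt
= \int_a^b g(t)\,{_{a}}\textsl{I}^{1-\alpha(\cdot,\cdot)}_{t} f'(t)\,dt
= \int_a^b f'(t)\,{_{t}}\textsl{I}^{1-\alpha(\cdot,\cdot)}_{b} g(t)\,dt .
\end{equation*}
The assumption ${_{t}}\textsl{I}^{1-\alpha(\cdot,\cdot)}_{b} g\in AC[a,b]$ then lets me perform a classical integration by parts on the last integral: the fundamental theorem of calculus applies to the absolutely continuous factor, and by the definition of the right Riemann--Liouville derivative one has $\frac{d}{dt}{_{t}}\textsl{I}^{1-\alpha(\cdot,\cdot)}_{b} g(t) = -\,{_{t}}\textsl{D}^{\alpha(\cdot,\cdot)}_{b} g(t)$. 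Collecting the boundary term and the remaining integral yields exactly the claimed identity.

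The second formula is obtained in the same way, tracking the extra minus sign in the Caputo--integral identity. Here I would apply Theorem~\ref{thm:IntByParts1} in the form $\int_a^b g(t)\,{_{t}}\textsl{I}^{1-\alpha(\cdot,\cdot)}_{b} f'(t)\,dt = \int_a^b f'(t)\,{_{a}}\textsl{I}^{1-\alpha(\cdot,\cdot)}_{t} g(t)\,dt$, and then integrate by parts classically using ${_{a}}\textsl{I}^{1-\alpha(\cdot,\cdot)}_{t} g\in AC[a,b]$ together with $\frac{d}{dt}{_{a}}\textsl{I}^{1-\alpha(\cdot,\cdot)}_{t} g(t) = {_{a}}\textsl{D}^{\alpha(\cdot,\cdot)}_{t} g(t)$; the overall sign then produces the leading minus in front of the boundary term.

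The step I expect to require the most care is the justification of the ordinary integration by parts, since it is precisely where the absolute continuity assumption on the fractional integral of $g$ is used: it guarantees that ${_{t}}\textsl{I}^{1-\alpha(\cdot,\cdot)}_{b} g$ (respectively ${_{a}}\textsl{I}^{1-\alpha(\cdot,\cdot)}_{t} g$) is differentiable almost everywhere with an integrable derivative, so that the Riemann--Liouville derivative of $g$ is well defined and the boundary-term identity is legitimate. Everything else follows directly from the two identities above and from Theorem~\ref{thm:IntByParts1}.
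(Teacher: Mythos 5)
Your proof is correct and takes essentially the same route as the paper's own proof: rewrite the Caputo derivative as ${_{a}}\textsl{I}^{1-\alpha(\cdot,\cdot)}_{t} f'$ (respectively $-\,{_{t}}\textsl{I}^{1-\alpha(\cdot,\cdot)}_{b} f'$), apply Theorem~\ref{thm:IntByParts1} with order $1-\alpha(\cdot,\cdot)$, and finish with classical integration by parts together with the definition of the Riemann--Liouville derivative. Your explicit check that $0<\alpha(t,\tau)<1-\frac{1}{n}$ is exactly the range $\frac{1}{n}<1-\alpha(t,\tau)<1$ required by Theorem~\ref{thm:IntByParts1} is a detail the paper leaves implicit.
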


\begin{proof}
By Definition~\ref{definition:Caputo}, it follows that
${^{C}_{a}}\textsl{D}^{\alpha(\cdot,\cdot)}_{t}f(t)
={_{a}}\textsl{I}^{1-\alpha(\cdot,\cdot)}_{t}\frac{d}{dt} f(t)$.
Applying Theorem~\ref{thm:IntByParts1} and integration
by parts for classical (integer order) derivatives, we obtain
\begin{equation*}
\begin{split}
\int_a^b g(t){^{C}_{a}}\textsl{D}^{\alpha(\cdot,\cdot)}_{t}f(t)dt
&=\int_a^b g(t){_{a}}\textsl{I}^{1-\alpha(\cdot,\cdot)}_{t}\frac{d}{dt}f(t)dt
=\int_a^b \frac{d}{dt}f(t){_{t}}\textsl{I}^{1-\alpha(\cdot,\cdot)}_{b}g(t)dt\\
&=\left.f(t){_{t}}\textsl{I}^{1-\alpha(\cdot,\cdot)}_{b} g(t)\right|_a^b
-\int_a^b f(t)\frac{d}{dt}{_{t}}\textsl{I}^{1-\alpha(\cdot,\cdot)}_{b}g(t)dt\\
&=\left.f(t){_{t}}\textsl{I}^{1-\alpha(\cdot,\cdot)}_{b} g(t)\right|_a^b
+\int_a^b f(t){_{t}}\textsl{D}^{\alpha(\cdot,\cdot)}_{b}g(t)dt.
\end{split}
\end{equation*}
The second formula is proved in a similar way.
\end{proof}


\section{Euler--Lagrange equations for incommensurate fractional variational problems of variable order}
\label{sec:MR}

Now we prove necessary optimality conditions of Euler--Lagrange type
for incommensurate order fractional variational problems.
Let $\alpha_i(t,\tau)$, $\beta_i(t,\tau)$, $i=1,\dots,n$, $n\in\mathbb{N}$,
satisfy the assumptions of Theorem~\ref{thm:IntByParts2}.
Consider the following problem.
\begin{problem}
\label{problem:Fundamental}
Find a function $q=q(t)$ for which the functional
\begin{equation}
\label{eq:Functional}
\mathcal{J}[q]=\int\limits_a^b
L\left(t,q(t),{^{C}_{a}}\textsl{D}^{\alpha_1(t,\tau)}_{t}q(t),
\dots,{^{C}_{a}}\textsl{D}^{\alpha_n(t,\tau)}_{t}q(t),
{^{C}_{t}}\textsl{D}^{\beta_1(t,\tau)}_{b}q(t),
\dots,{^{C}_{t}}\textsl{D}^{\beta_n(t,\tau)}_{b}q(t)\right)dt
\end{equation}
attains an extremum on the set
\begin{equation*}
\mathcal{D}=\left\{q\in C^1 ([a,b];\mathbb{R}):q(a)=q_a, q(b)=q_b
\text{ and }{^{C}_{a}}\textsl{D}^{\alpha_i(t,\tau)}_{t}q,
{^{C}_{t}}\textsl{D}^{\beta_i(t,\tau)}_{b}q \in C([a,b];\mathbb{R}),
i=1,\dots,n \right\}.
\end{equation*}
\end{problem}
We assume that $L\in C^1\left([a,b]\times \mathbb{R}^{2n+1};\mathbb{R}\right)$;
$t\mapsto\partial_{i+2}L$ is continuous, has absolutely continuous integral
${_{t}}\textsl{I}^{1-\alpha_i(t,\tau)}_{b}$ and continuous derivative
${_{t}}\textsl{D}^{\alpha_i(t,\tau)}_{b}$ for each $i=1,\dots,n$;
$t\mapsto\partial_{n+i+2}L$ is continuous, has absolutely continuous integral
${_{a}}\textsl{I}^{1-\beta_i(t,\tau)}_{t}$ and continuous derivative
${_{a}}\textsl{D}^{\beta_i(t,\tau)}_{t}$ for each $i=1,\dots,n$.
For simplicity of notation, we introduce the following notation:
\begin{equation*}
\left\{q,\alpha,\beta\right\}(t)
:=\left(t,q(t),{^{C}_{a}}\textsl{D}^{\alpha(t,\tau)}_{t}q(t),
{^{C}_{t}}\textsl{D}^{\beta(t,\tau)}_{b}q(t)\right),
\end{equation*}
where
\begin{equation*}
{^{C}_{a}}\textsl{D}^{\alpha(t,\tau)}_{t}
:= \left({^{C}_{a}}\textsl{D}^{\alpha_1(t,\tau)}_{t},
\dots,{^{C}_{a}}\textsl{D}^{\alpha_n(t,\tau)}_{t}\right),
\quad
{^{C}_{t}}\textsl{D}^{\beta(t,\tau)}_{b}
:=\left({^{C}_{t}}\textsl{D}^{\beta_1(t,\tau)}_{b},
\dots,{^{C}_{t}}\textsl{D}^{\beta_n(t,\tau)}_{b}\right).
\end{equation*}

\begin{definition}
\label{definition:OpProduct}
Let $f\in AC([a,b])$. For ${_{t}}\textsl{I}^{1-\gamma(t,\tau)}_{b} g\in AC([a,b])$
we define the following operator:
\begin{equation}
\label{op1}
\textsl{D}^{\gamma(t,\tau)}_{-}[f,g]:=-f{_{t}}\textsl{D}^{\gamma(t,\tau)}_{b}g
+g{^{C}_{a}}\textsl{D}^{\gamma(t,\tau)}_{t}[f],
\end{equation}
while for ${_{a}}\textsl{I}^{1-\gamma(t,\tau)}_{t} g\in AC([a,b])$ we define
\begin{equation}
\label{op2}
\textsl{D}^{\gamma(t,\tau)}_{+}[f,g]
:=-f{_{a}}\textsl{D}^{\gamma(t,\tau)}_{t}[g]
+g{^{C}_{t}}\textsl{D}^{\gamma(t,\tau)}_{b}[f].
\end{equation}
\end{definition}

\begin{remark}
If $\gamma(t,\tau) \equiv \gamma$ is a constant function, then
\begin{equation*}
\lim\limits_{\gamma\rightarrow 1^{-}}\textsl{D}^{\gamma}_{-}[f,g]
=fg'+gf'=\frac{d}{dt}(fg)=\lim\limits_{\gamma\rightarrow 1^{-}}\textsl{D}^{\gamma}_{-}[g,f]
\end{equation*}
and
\begin{equation*}
\lim\limits_{\gamma\rightarrow 1^{-}}\textsl{D}^{\gamma}_{+}[f,g]
=-fg'-gf'=-\frac{d}{dt}(fg)
=\lim\limits_{\gamma\rightarrow 1^{-}}\textsl{D}^{\gamma}_{+}[g,f].
\end{equation*}
\end{remark}

\begin{theorem}
\label{theorem:EL}
Let function $q$ be a solution to Problem~\ref{problem:Fundamental}. Then,
\begin{equation}
\label{eq:EL}
\partial_2 L\left\{q,\alpha,\beta\right\}(t)
-\sum\limits_{i=1}^n \textsl{D}^{\alpha_i(t,\tau)}_{-}\left[1,
\partial_{i+2}L\left\{q,\alpha,\beta\right\}(t)\right]\\
-\sum\limits_{i=1}^n \textsl{D}^{\beta_i(t,\tau)}_{+}\left[1,
\partial_{n+2+i} L\left\{q,\alpha,\beta\right\}(t)\right]=0.
\end{equation}
\end{theorem}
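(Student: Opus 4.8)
The plan is to follow the classical variational scheme---compute the first variation, integrate by parts to free the variation of all fractional derivatives, and invoke the fundamental lemma of the calculus of variations---but with the variable-order formula of Theorem~\ref{thm:IntByParts2} replacing the integer-order one.

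First I would fix an admissible variation. Let $\eta \in C^1([a,b];\mathbb{R})$ satisfy $\eta(a) = \eta(b) = 0$ together with ${^{C}_{a}}\textsl{D}^{\alpha_i(t,\tau)}_{t}\eta, {^{C}_{t}}\textsl{D}^{\beta_i(t,\tau)}_{b}\eta \in C([a,b];\mathbb{R})$, so that $q + \epsilon\eta \in \mathcal{D}$ for all $\epsilon \in \mathbb{R}$. Since the Caputo derivatives are linear, ${^{C}_{a}}\textsl{D}^{\alpha_i(t,\tau)}_{t}(q+\epsilon\eta) = {^{C}_{a}}\textsl{D}^{\alpha_i(t,\tau)}_{t}q + \epsilon\,{^{C}_{a}}\textsl{D}^{\alpha_i(t,\tau)}_{t}\eta$, and similarly for the right derivatives. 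Differentiating $\epsilon \mapsto \mathcal{J}[q+\epsilon\eta]$ under the integral sign---legitimate because $L \in C^1$ and the fractional derivatives are continuous along $\mathcal{D}$---and equating the result to zero at $\epsilon = 0$ (as $q$ is an extremizer) gives
$$\int_a^b \left(\partial_2 L\,\eta + \sum_{i=1}^n \partial_{i+2} L\,{^{C}_{a}}\textsl{D}^{\alpha_i(t,\tau)}_{t}\eta + \sum_{i=1}^n \partial_{n+i+2} L\,{^{C}_{t}}\textsl{D}^{\beta_i(t,\tau)}_{b}\eta\right)dt = 0,$$
where every partial derivative of $L$ is understood to be evaluated at $\left\{q,\alpha,\beta\right\}(t)$.

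Next I would apply Theorem~\ref{thm:IntByParts2} to each of the $2n$ fractional terms, taking $f = \eta$ and $g = \partial_{i+2} L$ (respectively $g = \partial_{n+i+2} L$). The standing hypotheses on $L$---that $t\mapsto\partial_{i+2}L$ has absolutely continuous integral ${_{t}}\textsl{I}^{1-\alpha_i(t,\tau)}_{b}$ and continuous derivative ${_{t}}\textsl{D}^{\alpha_i(t,\tau)}_{b}$, and analogously for $\partial_{n+i+2}L$---are exactly the premises required by the two formulas. Because $\eta(a)=\eta(b)=0$, each boundary term produced by the integration by parts vanishes, and the first variation reduces to
$$\int_a^b \eta\left(\partial_2 L + \sum_{i=1}^n {_{t}}\textsl{D}^{\alpha_i(t,\tau)}_{b}\partial_{i+2}L + \sum_{i=1}^n {_{a}}\textsl{D}^{\beta_i(t,\tau)}_{t}\partial_{n+i+2}L\right)dt = 0.$$
Since the bracketed integrand is continuous and $\eta$ is otherwise arbitrary, the fundamental lemma of the calculus of variations forces it to vanish on $[a,b]$.

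Finally I would recast this equation in the operator notation of Definition~\ref{definition:OpProduct}. The Caputo derivative of a constant is zero, so ${^{C}_{a}}\textsl{D}^{\gamma(t,\tau)}_{t}[1] = {^{C}_{t}}\textsl{D}^{\gamma(t,\tau)}_{b}[1] = 0$, and the definitions \eqref{op1}--\eqref{op2} collapse to $\textsl{D}^{\gamma(t,\tau)}_{-}[1,g] = -{_{t}}\textsl{D}^{\gamma(t,\tau)}_{b}g$ and $\textsl{D}^{\gamma(t,\tau)}_{+}[1,g] = -{_{a}}\textsl{D}^{\gamma(t,\tau)}_{t}g$. Substituting these identities transforms the vanishing bracket into precisely \eqref{eq:EL}. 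I expect the main difficulty to lie not in any single computation but in the bookkeeping of hypotheses: one must verify that differentiation under the integral sign is justified and, more delicately, that the regularity assumptions imposed on the partials of $L$ along an extremal exactly meet the premises of Theorem~\ref{thm:IntByParts2}, so that both the integration by parts and the cancellation of the boundary terms are rigorous.
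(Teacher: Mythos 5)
Your proposal is correct and follows essentially the same route as the paper's own proof: compute the first variation, integrate by parts via Theorem~\ref{thm:IntByParts2}, kill the boundary terms using $\eta(a)=\eta(b)=0$, apply the fundamental lemma, and rewrite the result through Definition~\ref{definition:OpProduct}. Your final step is in fact slightly more explicit than the paper's, since you spell out the identities $\textsl{D}^{\gamma(t,\tau)}_{-}[1,g]=-{_{t}}\textsl{D}^{\gamma(t,\tau)}_{b}g$ and $\textsl{D}^{\gamma(t,\tau)}_{+}[1,g]=-{_{a}}\textsl{D}^{\gamma(t,\tau)}_{t}g$ that the paper leaves implicit.
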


\begin{proof}
Assume that $q$ is an extremizer of $\mathcal{J}$. Consider the value of
$\mathcal{J}$ at nearby function $\hat{q}(t)=q(t)+\varepsilon\eta(t)$,
where $\varepsilon\in\mathbb{R}$ is a small parameter and
$\eta\in C^1([a,b];\mathbb{R})$ is an arbitrary function satisfying
$\eta(a)=\eta(b)=0$ and such that ${^{C}_{a}}\textsl{D}^{\alpha_i(t,\tau)}_{t}\hat{\eta}$
and ${^{C}_{t}}\textsl{D}^{\beta_i(t,\tau)}_{b}\hat{\eta}$ are continuous. Let
$$
J(\varepsilon)=\mathcal{J}[\hat{q}]
=\int_a^b L\left\{\hat{q},\alpha,\beta\right\}(t) dt.
$$
A necessary condition for $\hat{q}$ to be an extremizer is given by
\begin{multline}
\label{eq:1}
\left.\frac{dJ}{d\varepsilon}\right|_{\varepsilon=0} = 0
\Leftrightarrow\int_a^b\left(\partial_2 L\left\{q,\alpha,\beta\right\}(t) \eta(t)
+\sum\limits_{i=1}^n\partial_{i+2}L\left\{q,\alpha,
\beta\right\}(t){^{C}_{a}}\textsl{D}^{\alpha_i(t,\tau)}_{t}\eta(t)\right.\\
\left.+\sum\limits_{i=1}^n\partial_{n+2+i}L\left\{q,\alpha,
\beta\right\}(t){^{C}_{t}}\textsl{D}^{\beta_i(t,\tau)}_{b}\eta(t)\right)dt=0.
\end{multline}
Using the variable order fractional integration
by parts formulas (Theorem~\ref{thm:IntByParts2}), we obtain that
\begin{multline*}
\int_a^b \partial_{i+2}
L\left\{q,\alpha,\beta\right\}(t){^{C}_{a}}\textsl{D}^{\alpha_i(t,\tau)}_{t}\eta(t) dt\\
=\left.\eta(t) {_{t}}\textsl{I}^{1-\alpha_i(t,\tau)}_{b}\partial_{i+2}
L\left\{q,\alpha,\beta\right\}(t)\right|_a^b
+\int\limits_a^b\eta(t){_{t}}\textsl{D}^{\alpha_i(t,\tau)}_{b}\partial_{i+2}
L\left\{q,\alpha,\beta\right\}(t) dt, \quad i=1,\dots,n,
\end{multline*}
and
\begin{multline*}
\int_a^b \partial_{n+i+2}
L\left\{q,\alpha,\beta\right\}(t){^{C}_{t}}\textsl{D}^{\beta_i(t,\tau)}_{b}\eta(t) dt\\
=-\left.\eta(t) {_{a}}\textsl{I}^{1-\beta_i(t,\tau)}_{t}\partial_{n+i+2}
L\left\{q,\alpha,\beta\right\}(t)\right|_a^b
+\int\limits_a^b\eta(t){_{a}}\textsl{D}^{\beta_i(t,\tau)}_{t}\partial_{n+i+2}
L\left\{q,\alpha,\beta\right\}(t) dt, \quad i=1,\dots,n.
\end{multline*}
Because $\eta(a)=\eta(b)=0$, \eqref{eq:1} simplifies to
\begin{equation*}
\int_a^b \eta(t)\left(\partial_2 L\left\{q,\alpha,\beta\right\}(t)
+\sum\limits_{i=1}^n{_{t}}\textsl{D}^{\alpha_i(t,\tau)}_{b}\partial_{i+2}
L\left\{q,\alpha,\beta\right\}(t)
+\sum\limits_{i=1}^n{_{a}}\textsl{D}^{\beta_i(t,\tau)}_{t}\partial_{n+i+2}
L\left\{q,\alpha,\beta\right\}(t)\right)dt=0.
\end{equation*}
Applying the fundamental lemma of the calculus of variations
(see, e.g., \cite{book:vanBrunt}), we obtain that
\begin{equation*}
\partial_2 L\left\{q,\alpha,\beta\right\}(t)
+\sum\limits_{i=1}^n
{_{t}}\textsl{D}^{\alpha_i(t,\tau)}_{b}\partial_{i+2}
L\left\{q,\alpha,\beta\right\}(t)\\
+\sum\limits_{i=1}^n{_{a}}\textsl{D}^{\beta_i(t,\tau)}_{t}\partial_{n+i+2}
L\left\{q,\alpha,\beta\right\}(t)=0.
\end{equation*}
Finally, by Definition~\ref{definition:OpProduct}, we arrive to \eqref{eq:EL}.
\end{proof}


\section{Noether's theorem of fractional variable order}
\label{subsec:noether}

Now we show, employing operators \eqref{op1} and \eqref{op2},
that invariance of \eqref{eq:Functional} conducts to a variable order version
of a fractional conservation law. To prove this, we borrow the method from
\cite{gastao1,gastao4,jurgen}, where Noether's theorem is stated
for fractional derivatives of constant order.

\begin{definition}
A function $q$ that is a solution to \eqref{eq:EL}
is said to be a variable order fractional extremal
for functional $\mathcal{J}$.
\end{definition}

\begin{definition}
\label{definition:InvariantFunct}
We say that functional \eqref{eq:Functional} is invariant
under an $\varepsilon$-parameter group of infinitesimal transformations
\begin{equation}
\label{eq:transformation}
\bar{q}(t)=q(t)+\varepsilon\xi(t,q(t))+o(\varepsilon)
\end{equation}
if
\begin{equation}
\label{eq:InvarianceCond}
\int_{t_a}^{t_b}L\left(t,q(t),{^{C}_{a}}\textsl{D}^{\alpha(t,\tau)}_{t}q(t),
{^{C}_{t}}\textsl{D}^{\beta(t,\tau)}_{b}q(t)\right)dt\\
=\int_{t_a}^{t_b}L\left(t,\bar{q}(t),{^{C}_{a}}\textsl{D}^{\alpha(t,\tau)}_{t}\bar{q}(t),
{^{C}_{t}}\textsl{D}^{\beta(t,\tau)}_{b}\bar{q}(t)\right)dt
\end{equation}
for any subinterval $[t_a,t_b]\subseteq [a,b]$.
\end{definition}

\begin{lemma}[Necessary condition of invariance]
\label{theorem:CondInv}
If functional \eqref{eq:Functional} is invariant under an $\varepsilon$-parameter group
of infinitesimal transformations \eqref{eq:transformation}, then
\begin{multline}
\label{eq:CondInv}
\partial_2 L\left\{q,\alpha,\beta\right\}(t) \xi(t,q(t))
+\sum\limits_{i=1}^n \partial_{i+2} L\left\{q,\alpha,\beta\right\}(t)
{^{C}_{a}}\textsl{D}^{\alpha_i(t,\tau)}_{t}\xi(t,q(t))\\
+\sum\limits_{i=1}^n \partial_{n+i+2} L\left\{q,\alpha,\beta\right\}(t)
{^{C}_{t}}\textsl{D}^{\beta_i(t,\tau)}_{b}\xi(t,q(t))=0.
\end{multline}
\end{lemma}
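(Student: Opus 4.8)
The plan is to exploit that invariance is demanded on \emph{every} subinterval $[t_a,t_b]\subseteq[a,b]$, which lets me pass from the integral identity \eqref{eq:InvarianceCond} to a pointwise statement, and then to differentiate with respect to the group parameter $\varepsilon$ at $\varepsilon=0$, crucially using the linearity of the Caputo operators.

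First I would substitute the transformation \eqref{eq:transformation}, $\bar q(t)=q(t)+\varepsilon\xi(t,q(t))+o(\varepsilon)$, into the right-hand side of \eqref{eq:InvarianceCond}. Since the left-hand side is independent of $\varepsilon$, differentiating both sides in $\varepsilon$ and setting $\varepsilon=0$ gives
\begin{equation*}
\int_{t_a}^{t_b}\left.\frac{d}{d\varepsilon}\right|_{\varepsilon=0}L\left(t,\bar q(t),{^{C}_{a}}\textsl{D}^{\alpha(t,\tau)}_{t}\bar q(t),{^{C}_{t}}\textsl{D}^{\beta(t,\tau)}_{b}\bar q(t)\right)dt=0.
\end{equation*}

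Next, I would evaluate the inner derivative by the chain rule applied to the $2n+1$ $\varepsilon$-dependent arguments of $L$. The essential observation is that the left and right Caputo derivatives of Definition~\ref{definition:Caputo} are linear in their argument, so ${^{C}_{a}}\textsl{D}^{\alpha_i(t,\tau)}_{t}\bar q={^{C}_{a}}\textsl{D}^{\alpha_i(t,\tau)}_{t}q+\varepsilon\,{^{C}_{a}}\textsl{D}^{\alpha_i(t,\tau)}_{t}\xi+o(\varepsilon)$, and analogously for each $\beta_i$-derivative. Consequently $\partial_\varepsilon$ at $\varepsilon=0$ of these arguments returns precisely ${^{C}_{a}}\textsl{D}^{\alpha_i(t,\tau)}_{t}\xi(t,q(t))$ and ${^{C}_{t}}\textsl{D}^{\beta_i(t,\tau)}_{b}\xi(t,q(t))$, while $\partial_\varepsilon\bar q$ at $\varepsilon=0$ equals $\xi(t,q(t))$ and supplies the $\partial_2 L$ term. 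Collecting these contributions shows that the inner derivative is exactly the expression on the left of \eqref{eq:CondInv}.

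Finally, the displayed identity then reads $\int_{t_a}^{t_b}\Phi(t)\,dt=0$ for every subinterval, where $\Phi$ denotes the left-hand side of \eqref{eq:CondInv}. Since $\Phi$ is continuous in $t$ (by the regularity hypotheses on $L$ and on the admissible Caputo derivatives), a continuous function whose integral vanishes over all subintervals must vanish identically; equivalently, one differentiates $\int_{t_a}^{t}\Phi$ in the upper limit. This yields $\Phi\equiv 0$, which is \eqref{eq:CondInv}. I expect the only genuine obstacle to be the justification of differentiating under the integral sign and interchanging $d/d\varepsilon$ with the kernel-singular Caputo integrals; this is secured by their linearity together with the continuity assumptions ensuring that ${^{C}_{a}}\textsl{D}^{\alpha_i(t,\tau)}_{t}\bar q$ and ${^{C}_{t}}\textsl{D}^{\beta_i(t,\tau)}_{b}\bar q$ vary smoothly in $\varepsilon$, so that the $o(\varepsilon)$ remainder passes uniformly through the operators. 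The remaining chain-rule bookkeeping and the localization step are routine.
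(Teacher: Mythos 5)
Your proposal is correct and is essentially the paper's own argument: both exploit invariance on every subinterval $[t_a,t_b]$ to localize, and both then differentiate at $\varepsilon=0$ via the chain rule together with the linearity of the variable order Caputo operators applied to $\bar q=q+\varepsilon\xi+o(\varepsilon)$. The only difference is the order of the two steps — the paper first passes to pointwise equality of the integrands (its equation \eqref{eq:2}) and only then differentiates in $\varepsilon$, thereby never needing to justify differentiation under the integral sign, whereas you differentiate under the integral first and localize at the end; this costs you the extra (but manageable) interchange justification you yourself flag.
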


\begin{proof}
Since, by hypothesis, condition \eqref{eq:InvarianceCond}
is satisfied for any subinterval $[t_a,t_b]\subseteq [a,b]$, we have
\begin{equation}
\label{eq:2}
L\left(t,q(t),{^{C}_{a}}\textsl{D}^{\alpha(t,\tau)}_{t}q(t),
{^{C}_{t}}\textsl{D}^{\beta(t,\tau)}_{b}q(t)\right)
=L\left(t,\bar{q}(t),{^{C}_{a}}\textsl{D}^{\alpha(t,\tau)}_{t}\bar{q}(t),
{^{C}_{t}}\textsl{D}^{\beta(t,\tau)}_{b}\bar{q}(t)\right).
\end{equation}
Now, differentiating \eqref{eq:2} with respect to $\varepsilon$,
then putting $\varepsilon=0$, and applying definitions and properties
of variable order Caputo fractional derivatives, we obtain that
\begin{equation*}
\begin{split}
0&=\partial_2 L\left\{q,\alpha,\beta\right\}(t) \xi(t,q(t))\\
&\qquad +\sum\limits_{i=1}^n \partial_{i+2}
L\left\{q,\alpha,\beta\right\}(t)\frac{d}{d\varepsilon}\left[\int_a^t
\frac{1}{\Gamma(1-\alpha_i(t,\tau))}(t-\tau)^{-\alpha_i(t,\tau)}
\frac{d}{d\tau}\bar{q}(\tau)d\tau\right]_{\varepsilon=0}\\
&\qquad +\sum\limits_{i=1}^n\partial_{n+i+2}L\left\{q,\alpha,\beta\right\}(t)
\frac{d}{d\varepsilon}\left[\int_t^b
\frac{-1}{\Gamma(1-\beta_i(t,\tau))}(\tau-t)^{-\beta_i(t,\tau)}
\frac{d}{d\tau}\bar{q}(\tau)d\tau\right]_{\varepsilon=0}\\
&=\partial_2 L\left\{q,\alpha,\beta\right\}(t) \xi(t,q(t))
+\sum\limits_{i=1}^n \partial_{i+2} L\left\{q,\alpha,\beta\right\}(t)
{^{C}_{a}}\textsl{D}^{\alpha_i(t,\tau)}_{t}\xi(t,q(t))\\
&\qquad +\sum\limits_{i=1}^n \partial_{n+i+2} L\left\{q,\alpha,\beta\right\}(t)
{^{C}_{t}}\textsl{D}^{\beta_i(t,\tau)}_{b}\xi(t,q(t)).
\end{split}
\end{equation*}
\end{proof}

\begin{theorem}[Noether's theorem for variable order fractional variational problems]
\label{theorem:Noether}
If functional \eqref{eq:Functional} is invariant in the sense
of Definition~\ref{definition:InvariantFunct}, then
\begin{equation*}
\sum\limits_{i=1}^n\textsl{D}^{\alpha_i(t,\tau)}_{-}[\xi(t,q(t)),
\partial_{i+2} L\left\{q,\alpha,\beta\right\}(t)]\\
+\sum\limits_{i=1}^n\textsl{D}^{\beta_i(t,\tau)}_{+}[\xi(t,q(t)),
\partial_{n+i+2} L\left\{q,\alpha,\beta\right\}(t)]=0,
\quad t\in[a,b],
\end{equation*}
along all variable order fractional extremals $q(\cdot)$.
\end{theorem}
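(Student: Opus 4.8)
The plan is to expand the left-hand side through Definition~\ref{definition:OpProduct} and then to recognize that the resulting expression splits into two groups: one proportional to $\xi(t,q(t))$, controlled by the Euler--Lagrange equation \eqref{eq:EL}, and one built from the fractional derivatives of $\xi$, controlled by the necessary condition of invariance \eqref{eq:CondInv}. Writing $f=\xi(t,q(t))$ and letting $g$ stand for the relevant partial derivative of $L$, the operators \eqref{op1} and \eqref{op2} give
\begin{equation*}
\textsl{D}^{\alpha_i(t,\tau)}_{-}[\xi,\partial_{i+2}L]
=-\xi\,{_{t}}\textsl{D}^{\alpha_i(t,\tau)}_{b}\partial_{i+2}L
+\partial_{i+2}L\,{^{C}_{a}}\textsl{D}^{\alpha_i(t,\tau)}_{t}\xi,
\end{equation*}
and an analogous expression for $\textsl{D}^{\beta_i(t,\tau)}_{+}[\xi,\partial_{n+i+2}L]$. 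Summing over $i$ and separating the two kinds of terms is the organizing step.

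For the terms carrying $\xi$ as a factor, I would first note that the Caputo derivative of a constant vanishes, so that $\textsl{D}^{\gamma(t,\tau)}_{-}[1,g]=-{_{t}}\textsl{D}^{\gamma(t,\tau)}_{b}g$ and $\textsl{D}^{\gamma(t,\tau)}_{+}[1,g]=-{_{a}}\textsl{D}^{\gamma(t,\tau)}_{t}g$. Substituting these into \eqref{eq:EL} recasts the Euler--Lagrange equation in the equivalent form
\begin{equation*}
\partial_2 L\left\{q,\alpha,\beta\right\}(t)
+\sum_{i=1}^n{_{t}}\textsl{D}^{\alpha_i(t,\tau)}_{b}\partial_{i+2}L
+\sum_{i=1}^n{_{a}}\textsl{D}^{\beta_i(t,\tau)}_{t}\partial_{n+i+2}L=0,
\end{equation*}
which is precisely the intermediate identity appearing in the proof of Theorem~\ref{theorem:EL}. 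Since $q$ is an extremal, this holds, and the $\xi$-proportional contribution therefore collapses to $\xi(t,q(t))\,\partial_2 L\left\{q,\alpha,\beta\right\}(t)$.

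For the remaining terms, those of the form $\partial_{i+2}L\,{^{C}_{a}}\textsl{D}^{\alpha_i(t,\tau)}_{t}\xi$ and $\partial_{n+i+2}L\,{^{C}_{t}}\textsl{D}^{\beta_i(t,\tau)}_{b}\xi$, I would invoke the necessary condition of invariance \eqref{eq:CondInv} from Lemma~\ref{theorem:CondInv}. That identity states exactly that their sum equals $-\partial_2 L\left\{q,\alpha,\beta\right\}(t)\,\xi(t,q(t))$. Adding the two groups then yields $\xi\,\partial_2 L-\xi\,\partial_2 L=0$, which is the claimed conservation law.

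I do not expect a serious obstacle: the argument is a direct algebraic combination of two previously established identities. The only point requiring care is the bookkeeping in the operator expansion together with the observation that ${^{C}_{a}}\textsl{D}^{\gamma(t,\tau)}_{t}[1]={^{C}_{t}}\textsl{D}^{\gamma(t,\tau)}_{b}[1]=0$, which is what makes the Euler--Lagrange equation \eqref{eq:EL} align term-by-term with the Riemann--Liouville derivative form needed to cancel the invariance contribution.
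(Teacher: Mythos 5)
Your proposal is correct and follows essentially the same route as the paper: both proofs combine the Euler--Lagrange equation in its Riemann--Liouville form (the intermediate identity from Theorem~\ref{theorem:EL}, obtained because the Caputo derivative of a constant vanishes) with the necessary condition of invariance of Lemma~\ref{theorem:CondInv}, and then identify the result via Definition~\ref{definition:OpProduct}. The only difference is presentational: the paper substitutes the Euler--Lagrange identity into \eqref{eq:CondInv} and then recognizes the operators, whereas you expand the operators first and cancel the two groups of terms, which is the same algebra in reverse order.
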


\begin{proof}
By Theorem~\ref{theorem:EL} we have
\begin{equation}
\label{eq:3}
\partial_2 L\left\{q,\alpha,\beta\right\}(t)
=-\sum\limits_{i=1}^n
{_{t}}\textsl{D}^{\alpha_i(t,\tau)}_{b}\partial_{i+2}
L\left\{q,\alpha,\beta\right\}(t)
-\sum\limits_{i=1}^n{_{a}}\textsl{D}^{\beta_i(t,\tau)}_{t}\partial_{n+i+2}
L\left\{q,\alpha,\beta\right\}(t).
\end{equation}
Substituting \eqref{eq:3} into \eqref{eq:CondInv}, we obtain
\begin{multline*}
-\sum\limits_{i=1}^n
\xi(t,q(t)){_{t}}\textsl{D}^{\alpha_i(t,\tau)}_{b}\partial_{i+2}
L\left\{q,\alpha,\beta\right\}(t)
-\sum\limits_{i=1}^n\xi(t,q(t)){_{a}}\textsl{D}^{\beta_i(t,\tau)}_{t}\partial_{n+i+2}
L\left\{q,\alpha,\beta\right\}(t)\\
+\sum\limits_{i=1}^n \partial_{i+2} L\left\{q,\alpha,\beta\right\}(t)
{^{C}_{a}}\textsl{D}^{\alpha_i(t,\tau)}_{t}\xi(t,q(t))
+\sum\limits_{i=1}^n \partial_{n+i+2} L\left\{q,\alpha,\beta\right\}(t)
{^{C}_{t}}\textsl{D}^{\beta_i(t,\tau)}_{b}\xi(t,q(t))=0.
\end{multline*}
Finally, by Definition~\ref{definition:OpProduct}, one has
\begin{equation*}
\sum\limits_{i=1}^n\textsl{D}^{\alpha_i(t,\tau)}_{-}[\xi(t,q(t)),
\partial_{i+2} L\left\{q,\alpha,\beta\right\}(t)]
+\sum\limits_{i=1}^n\textsl{D}^{\beta_i(t,\tau)}_{+}[\xi(t,q(t)),
\partial_{n+i+2} L\left\{q,\alpha,\beta\right\}(t)]=0.
\end{equation*}
\end{proof}


\section{An illustrative example}
\label{sec:ex}

Let $\alpha$ and $\beta$ be two functions such that $0<\alpha(t,\tau),\beta(t,\tau)<1-\frac{1}{l}$
for a certain natural number $l$ greater or equal than two. Consider the following problem:
\begin{equation}
\label{eq:example}
\begin{gathered}
\mathcal{J}[q]=\int_a^b L\left(t,{^{C}_{a}}\textsl{D}^{\alpha(t,\tau)}_{t}q(t),
{^{C}_{t}}\textsl{D}^{\beta(t,\tau)}_{b}q(t)\right) dt \longrightarrow \mbox{extremize}\\
q(a)=q_a \, , \quad q(b)=q_b.
\end{gathered}
\end{equation}
For the transformation
\begin{equation}
\label{eq:transformation2}
\bar{q}(t)=q(t)+\varepsilon c+o(\varepsilon),
\end{equation}
where $c$ is a constant, we have
\begin{equation*}
\int_{t_a}^{t_b}L\left(t,{^{C}_{a}}\textsl{D}^{\alpha(t,\tau)}_{t}q(t),
{^{C}_{t}}\textsl{D}^{\beta(t,\tau)}_{b}q(t)\right) dt
=\int_{t_a}^{t_b}L\left(t,{^{C}_{a}}\textsl{D}^{\alpha(t,\tau)}_{t}\bar{q}(t),
{^{C}_{t}}\textsl{D}^{\beta(t,\tau)}_{b}\bar{q}(t)\right)dt
\end{equation*}
for any $[t_a,t_b]\subseteq [a,b]$. Therefore, $\mathcal{J}[q]$ is invariant under
\eqref{eq:transformation2} and Noether's theorem (Theorem~\ref{theorem:Noether}) asserts that
\begin{equation*}
\textsl{D}^{\alpha(t,\tau)}_{-}\left[c,\partial_2L\left(t,{^{C}_{a}}\textsl{D}^{\alpha(t,\tau)}_{t}q(t),
{^{C}_{t}}\textsl{D}^{\beta(t,\tau)}_{b}q(t)\right)\right]\\
+\textsl{D}^{\beta(t,\tau)}_{+}\left[c,\partial_3L\left(t,{^{C}_{a}}\textsl{D}^{\alpha(t,\tau)}_{t}q(t),
{^{C}_{t}}\textsl{D}^{\beta(t,\tau)}_{b}q(t)\right)\right]=0
\end{equation*}
along any extremal $q(t)$ of \eqref{eq:example}.


\section{Conclusion}
\label{sec:conc}

Noether's symmetry theorem, establishing that variational invariance imply
conservation laws, is one of the most important results in physics
and the calculus of variations. Such conservation laws are,
however, only valid for conservative systems.
Nonconservative forces, like friction, remove energy from the
systems and, as a consequence, Noether's conservation
laws cease to be valid. In order to cope with dissipative forces
that do not store energy, one possibility is to use fractional calculus
\cite{CD:Riewe:1996,CD:Riewe:1997}. The study of problems of the calculus
of variations with fractional derivatives of variable order
is, however, a rather recent subject, and available results reduce
to those available at \cite{Atanackovic1,MyID:241,Tatiana:IDOTA2011}.
Here we obtain Euler--Lagrange optimality conditions
for variational problems with fractional derivatives of incommensurate variable order
and, for such fractional Euler--Lagrange extremals,
we provide a notion of conservation law
and prove a version of Noether's theorem.

The variable order calculus of variations is underdeveloped and much remains to be done.
We mention here one important question that remains open. The Noether type theorem we obtain here
only includes the terms related to ``conservation of momentum''.
It would be important to obtain a more general form of the Noether theorem
with ``energy'' terms.


\section*{Acknowledgements}

Work supported by {\it FEDER} funds through
{\it COMPETE} --- Operational Programme Factors of Competitiveness
(``Programa Operacional Factores de Competitividade'')
and by Portuguese funds through the
{\it Center for Research and Development
in Mathematics and Applications} (University of Aveiro)
and the Portuguese Foundation for Science and Technology
(``FCT --- Funda\c{c}\~{a}o para a Ci\^{e}ncia e a Tecnologia''),
within project PEst-C/MAT/UI4106/2011
with COMPETE number FCOMP-01-0124-FEDER-022690.
Odzijewicz was also supported by FCT through the Ph.D. fellowship
SFRH/BD/33865/2009; Malinowska through the project ``Information Platform TEWI''
co-financed by the European Union under the European Regional Development Fund;
and Torres by FCT, through the project PTDC/MAT/113470/2009,
and by the European Union Seventh Framework Programme FP7-PEOPLE-2010-ITN
under grant 64735-SADCO.




\begin{thebibliography}{xx}

\bibitem{Ref:2}
O. P. Agrawal, S. I. Muslih, D. Baleanu,
\textit{Generalized variational calculus in terms of multi-parameters fractional derivatives.}
Commun. Nonlinear Sci. Numer. Simul. \textbf{16} (2011), 4756--4767.

\bibitem{DerInt}
R. Almeida, D. F. M. Torres,
\textit{Calculus of variations with fractional
derivatives and fractional integrals.}
Appl. Math. Lett. \textbf{22} (2009), 1816--1820.
{\tt arXiv:0907.1024}

\bibitem{MR2524365}
T. M. Atanackovi\'c, S. Konjik, S. Pilipovi{\'c}, S. Simi{\'c},
\textit{Variational problems with fractional derivatives:
invariance conditions and Noether's theorem.}
Nonlinear Anal. \textbf{71} (2009), 1504--1517.

\bibitem{Atanackovic1}
T. M. Atanackovic, S. Pilipovic,
\textit{Hamilton's principle with variable order fractional derivatives.}
Fract. Calc. Appl. Anal. \textbf{14} (2011), 94--109.

\bibitem{Ref:1}
D. Baleanu, T. Avkar,
\textit{Lagrangians with linear velocities
within Riemann-Liouville fractional derivatives.}
Nuovo Cimento Soc. Ital. Fis. B \textbf{119} (2004), 73--79.
{\tt arXiv:math-ph/0405012}

\bibitem{b:Baleanu}
D. Baleanu, K. Diethelm, E. Scalas, J. J. Trujillo,
\textit{Fractional calculus.}
World Scientific Publishing Co. Pte. Ltd., Hackensack, NJ, 2012.

\bibitem{Ref:4}
D. Baleanu, S. I. Muslih,
\textit{Lagrangian formulation of classical fields
within Riemann-Liouville fractional derivatives.}
Phys. Scripta \textbf{72} (2005), 119--121.

\bibitem{MyID:152}
N. R. O. Bastos, R. A. C. Ferreira, D. F. M. Torres,
\textit{Necessary optimality conditions for fractional
difference problems of the calculus of variations.}
Discrete Contin. Dyn. Syst. \textbf{29} (2011), 417--437.
{\tt arXiv:1007.0594}

\bibitem{Caponetto}
R. Caponetto, G. Dongola, L. Fortuna, I. Petras,
\textit{Fractional Order Systems: Modeling and Control Applications.}
World Scintific Company, 2010.

\bibitem{Coimbra}
C. F. M. Coimbra,
\textit{Mechanics with variable-order differential operators.}
Ann. Phys. \textbf{12} (2003), 692–-703.

\bibitem{Cresson}
J. Cresson,
\textit{Fractional embedding of differential operators and Lagrangian systems.}
J. Math. Phys. \textbf{48} (2007), 033504, 34~pp.
{\tt arXiv:math/0605752}

\bibitem{Diaz:Coimbra}
G. Diaz, C. F. M. Coimbra,
\textit{Nonlinear dynamics and control of a variable order
oscillator with application to the van der Pol equation.}
Nonlinear Dynam. \textbf{56} (2009), 145--157.

\bibitem{gastao1}
G. S. F. Frederico, D. F. M. Torres,
\textit{A formulation of Noether's theorem for fractional problems of calculus of variations.}
J. Math. Anal. Appl. \textbf{334} (2007), 834--846.
{\tt arXiv:math/0701187}

\bibitem{gastao3}
G. S. F. Frederico, D. F. M. Torres,
\textit{Fractional conservation laws in optimal control theory.}
Nonlinear Dynam. \textbf{53} (2008), 215--222.
{\tt arXiv:0711.0609}

\bibitem{gastao4}
G. S. F. Frederico, D. F. M. Torres,
\textit{Fractional Noether's theorem in the Riesz-Caputo sense.}
Appl. Math. Comput. \textbf{217} (2010), 1023--1033.
{\tt arXiv:1001.4507}

\bibitem{Ref:3}
M. A. E. Herzallah, D. Baleanu,
\textit{Fractional Euler-Lagrange equations revisited.}
Nonlinear Dynam. \textbf{69} (2012), 977--982.

\bibitem{Ivady}
P. Ivady,
\textit{A note on a gamma function inequality.}
J. Math. Inequal. \textbf{3} (2009), 227--236.

\bibitem{MR2969872}
F. Jarad, T. Abdeljawad, D. Baleanu,
\textit{Stability of $q$-fractional non-autonomous systems.}
Nonlinear Anal. Real World Appl. {\bf 14} (2013), 780--784.

\bibitem{jurgen}
J. Jost, X. Li-Jost,
\textit{Calculus of variations},
Cambridge Univ. Press, Cambridge, 1998.

\bibitem{book:Kilbas}
A. A. Kilbas, H. M. Srivastava, J. J. Trujillo,
\textit{Theory and applications of fractional differential equations},
North-Holland Mathematics Studies, 204, Elsevier, Amsterdam, 2006.

\bibitem{book:Klimek}
M. Klimek,
\textit{On solutions of linear fractional differential equations of a variational type.}
The Publishing Office of Czenstochowa University of Technology, Czestochowa, 2009.

\bibitem{book:Kossman}
Y. Kosmann-Schwarzbach,
\textit{The Noether theorems: Invariance and conservation laws in the twentieth century.}
Springer, New York, 2011.

\bibitem{Lorentzo}
C. F. Lorenzo, T. T. Hartley,
\textit{Variable order and distributed order fractional operators.}
Nonlinear Dynam. \textbf{29} (2002), 57--98.

\bibitem{TheBook}
A. B. Malinowska, D. F. M. Torres,
\textit{Introduction to the fractional calculus of variations.}
Imp. Coll. Press, London, 2012.

\bibitem{comDorota}
D. Mozyrska, D. F. M. Torres,
\textit{Minimal modified energy control for fractional
linear control systems with the Caputo derivative.}
Carpathian J. Math. \textbf{26} (2010), 210--221.
{\tt arXiv:1004.3113}

\bibitem{MyID:181}
D. Mozyrska, D. F. M. Torres,
\textit{Modified optimal energy and initial memory
of fractional continuous-time linear systems.}
Signal Process. \textbf{91} (2011), 379--385.
{\tt arXiv:1007.3946}

\bibitem{MyID:241}
T. Odzijewicz, A. B. Malinowska, D. F. M. Torres,
\textit{Variable order fractional variational calculus for double integrals.}
Proc. 51st IEEE Conference on Decision and Control,
Dec. 10-13, 2012, Maui, Hawaii, art. no. 6426489, 6873--6878.
{\tt arXiv:1209.1345}

\bibitem{Tatiana:IDOTA2011}
T. Odzijewicz, A. B. Malinowska, D. F. M. Torres,
\textit{Fractional variational calculus of variable order.}
Advances in Harmonic Analysis and Operator Theory (Eds: A. Almeida, L. P. Castro, F.-O. Speck),
The Stefan Samko Anniversary Volume, Birkh\"auser, Basel, 2013, 291--301.
{\tt arXiv:1110.4141}

\bibitem{book:Ortigueira}
M. D. Ortigueira,
\textit{Fractional calculus for scientists and engineers.}
Lecture Notes in Electrical Engineering, 84, Springer, Dordrecht, 2011.

\bibitem{book:Podlubny}
I. Podlubny,
\textit{Fractional differential equations.}
Mathematics in Science and Engineering, 198,
Academic Press, San Diego, CA, 1999.

\bibitem{Ramirez:Coimbra1}
L. E. S. Ramirez, C. F. M. Coimbra,
\textit{On the selection and meaning
of variable order operators for dynamic modeling.}
Int. J. Differ. Equ. \textbf{2010} (2010), Art. ID 846107, 16~pp.

\bibitem{Ramirez:Coimbra2}
L. E. S. Ramirez, C. F. M. Coimbra,
\textit{On the variable order dynamics of the
nonlinear wake caused by a sedimenting particle.}
Phys. D \textbf{240} (2011), 1111--1118.

\bibitem{CD:Riewe:1996}
F. Riewe,
\textit{Nonconservative Lagrangian and Hamiltonian mechanics.}
Phys. Rev. E (3) \textbf{53} (1996), 1890--1899.

\bibitem{CD:Riewe:1997}
F. Riewe,
\textit{Mechanics with fractional derivatives.}
Phys. Rev. E (3) \textbf{55} (1997), 3581--3592.

\bibitem{Samko:Ross1}
B. Ross, S. G. Samko,
\textit{Fractional integration operator of a variable
order in the Holder spaces $H^{\lambda(x)}$.}
Internat. J. Math. Math. Sci. \textbf{18} (1995), 777--788.

\bibitem{Samko}
S. G. Samko,
\textit{Fractional integration and differentiation of variable order.}
Anal. Math. \textbf{21} (1995), 213--236.

\bibitem{Samko:book}
S. G. Samko, A. A. Kilbas, O. I. Marichev,
\textit{Fractional integrals and derivatives},
translated from the 1987 Russian original, Gordon and Breach, Yverdon, 1993.

\bibitem{Samko:Ross2}
S. G. Samko, B. Ross,
\textit{Integration and differentiation to a variable fractional order.}
Integral Transform. Spec. Funct. \textbf{1} (1993), 277--300.

\bibitem{TM:rec}
J. A. Tenreiro Machado, V. Kiryakova, F. Mainardi,
\textit{Recent history of fractional calculus.}
Commun. Nonlinear Sci. Numer. Simul. \textbf{16} (2011), 1140--1153.

\bibitem{book:vanBrunt}
B. van Brunt,
\textit{The calculus of variations.}
Universitext, Springer, New York, 2004.

\end{thebibliography}
\end{document}